\documentclass{amsart}

\usepackage{xcolor}
\usepackage{setspace}

\newtheorem{theorem}{Theorem}[section]
\newtheorem{prop}[theorem]{Proposition}
\newtheorem{lem}[theorem]{Lemma}

\newtheorem{rem}[theorem]{Remark}

\numberwithin{equation}{section}
\doublespacing
 
\begin{document}
\title[Fourier inequalities for $l^p$-balls]{Sharp Fourier inequalities and lattice point discrepancy for $l^p$-balls}

\author{Martin Lind}
\address{Department of Mathematics and Computer Science, Karlstad University, Universitetsgatan 2, 65188 Karlstad, Sweden}
\email{martin.lind@kau.se}

\subjclass[2020]{42A38, 11H06, 42B20}

\keywords{Fourier transform inequalities, lattice point discrepancy, asymptotic behavior, oscillatory integrals}

\begin{abstract}

For $1<p\le 2$, we establish sharp inequalities for the Fourier transform of the characteristic function of the $l^p$-unit ball $B_p\subset\mathbb{R}^2$. We show that
$$
\sup_{\boldsymbol{\omega}\in\mathbb{R}^2}\|\boldsymbol{\omega}\|_2^{3/2}|\widehat{\chi_{B_p}}(\boldsymbol{\omega})|\asymp (p-1)^{-1/2}\quad\text{as }p\rightarrow1+
$$
As an application, we obtain corresponding bounds for lattice point discrepancy inequalities for dilates of $B_p$.

\end{abstract}

\maketitle

\section{Introduction}

Inequalities for oscillatory integrals and Fourier transforms constitute a fundamental part of modern analysis, with applications ranging from harmonic analysis to geometric number theory. In many problems, the central difficulty is not only to establish decay rates, but also to control the associated constants in a precise and explicit manner. Such quantitative inequalities are particularly important when geometric parameters vary and approach critical or degenerate regimes.

A classical inequality of Hlawka \cite{Hlawka} asserts that if $K\subset\mathbb{R}^2$ is a bounded, convex domain with piecewise smooth boundary and everywhere non-vanishing curvature, then the Fourier transform of its characteristic function satisfies
\begin{equation}
    \label{decayFourier}
    |\widehat{\chi_K}(\boldsymbol{\omega})|\le \frac{C}{1+\|\boldsymbol{\omega}\|_2^{3/2}},\quad\boldsymbol{\omega}\in\mathbb{R}^2,
\end{equation}
for some constant $C>0$ depending on $K$, see also \cite{Herz,Randol1,Randol2}. (Here, $\|\boldsymbol{\omega}\|_2$ denotes the $l^2$-norm of $\boldsymbol{\omega}$.)
This estimate is optimal with respect to the decay exponent and underlies a variety of inequalities for lattice point discrepancies and related counting problems \cite{Iosevich1, Stein, SteinShak}. The curvature assumption is essential: if curvature vanishes, the inequality generally fails, indicating that the constant $C$ must encode fine geometric information.

In this work we study how such Fourier decay inequalities deteriorate as curvature degenerates, focusing on the unit ball $B_p$ of the $l^p$-norm in $\mathbb{R}^2$. For $1<p\le 2$ the the boundary of $B_p$ has strictly positive curvature, while at $p=1$ the curvature is a.e. 0 and (\ref{decayFourier}) fails. Hence, we expect the constant $C$ of (\ref{decayFourier}) to blow-up as $p\rightarrow1+$, and we are interested in determining its sharp asymptotic behavior as $p\rightarrow1+$.

Our main result establishes an explicit inequality of the form
\begin{equation}
    \label{limitingIneq}
    \sup_{\boldsymbol{\omega}\in\mathbb{R}^2}\|\boldsymbol{\omega}\|_2^{3/2}|\widehat{\chi_{B_p}}(\boldsymbol{\omega})|\le C(p-1)^{-1/2}, \quad 1<p\le 2,
\end{equation}
where $C$ is an absolute constant. We further show that (\ref{limitingIneq}) is sharp in the sense that the exponent $(p-1)^{-1/2}$ cannot be improved. The proof is entirely inequality-driven: it combines explicit oscillatory integral estimates with quantitative versions of van der Corput–type inequalities \cite{Rogers}, where all constants are tracked carefully.

We shall discuss an application of (\ref{limitingIneq}) to geometric number theory. Let $K\subset\mathbb{R}^2$ be a bounded, convex domain containing $\boldsymbol{0}$ and having a piecewise smooth boundary.
A classical problem is to study the asymptotic behavior of the number of lattice points in the scaled set $rK=\{r\boldsymbol{x}:\boldsymbol{x}\in K\}$ as $r$ grows large. Denote
\begin{equation}
    \nonumber
    N_K(r)=\sharp(\mathbb{Z}^2\cap rK),
\end{equation}
where $\sharp(A)$ denotes the cardinality of the finite set $A$. If $\partial K$ has non-vanishing curvature, then the lattice point discrepancy satisfies 
\begin{equation}
    \nonumber
    E_K(r)=|N_K(r)-\text{area}(K)r^2|=\mathcal{O}(r^{2/3}),
\end{equation}
see, e.g., \cite{Iosevich1, Stein, SteinShak}.
By the discussion above, we have for every $p\in(1,2]$ that
\begin{equation}
    \label{latticeIntr1}
    E_p(r)=E_{B_p}(r)\le C_pr^{2/3}.
\end{equation}
For $p=1$ the estimate (\ref{latticeIntr1}) fails (due to the vanishing of the curvature of $\partial B_1$). 
As an application of (\ref{limitingIneq}), we prove that 
\begin{equation}
    \label{latticeError}
    \sup_{r>0}\frac{E_p(r)}{r^{2/3}}\le C(p-1)^{-1/2}, \quad 1<p\le 2,
\end{equation}
where $C$ is an absolute constant. This provides another look on the failure of (\ref{latticeIntr1}) for $p=1$. In particular, the estimate (\ref{latticeError}) itself reflects the fact that $p=1$ is exceptional.

\subsection{Funding statement}
The author declares that no funds, grants, or other support were received during the preparation of this manuscript.

\subsection{Acknowledgments}
The author is deeply grateful to the three anonymous reviewers for their careful and thoughtful readings of my manuscript.
\section{Auxiliary results}
\subsection{Notations} 
For $p>0$ and $\boldsymbol{x}=(x,y)\in\mathbb{R}$ we denote 
$$
\|\boldsymbol{x}\|_p=\left(|x|^p+|y|^p\right)^{1/p}.
$$
Denote by 
$$
B_p(\boldsymbol{x}_0,r)=\{\boldsymbol{x}\in\mathbb{R}^2:\|\boldsymbol{x}-\boldsymbol{x}_0\|_p\le r\}.
$$
In particular, we denote by $B_p=B_p(0,1)$ the unit ball in $l^p$-norm. We shall only be interested in $B_p$ for $p\ge1$.

Let $\boldsymbol{x}=(x,y)$ and $\boldsymbol{\omega}=(\alpha,\beta)\in\mathbb{R}^2$. The Fourier transform of a function $f\in L^1(\mathbb{R}^2)$ is defined by
\begin{equation}
    \label{fourierDef}
    \widehat{f}(\boldsymbol{\omega})=\widehat{f}(\alpha,\beta)=\frac{1}{2\pi}\iint_{\mathbb{R}^2}e^{-i(x\alpha+y\beta)}f(x,y)dxdy.
\end{equation}
\subsection{Results on Fourier transform}

We first derive some simple formulae for the Fourier transform (\ref{fourierDef}) of functions of the form
\begin{equation}
    \label{almostRadial}
    f(\boldsymbol{x})=h(\|\boldsymbol{x}\|_p^p)\chi_{B_p}(\boldsymbol{x})
\end{equation}
where $p\ge1$ and $h:[0,1]\rightarrow[0,\infty)$.
\begin{lem}
Let $f$ given by (\ref{almostRadial}) and define $\varphi_p:[0,1]\rightarrow[0,1]$ by
\begin{equation}
\label{phi-Function}
\varphi_p(x)=\left(1-x^p\right)^{1/p}.
\end{equation}
then
\begin{equation}
\label{FT-almostRadial}
\widehat{f}(\alpha,\beta)=\frac{2}{\pi}\int_0^1\cos(\alpha x)\left(\int_0^{\varphi_p(x)}\cos(\beta y)h(x^p+y^p)dy\right)dx.
\end{equation}
\end{lem}
\begin{proof}
By Fubini's theorem, Euler's formula and even/odd functions, we have
\begin{eqnarray}
    \nonumber
    2\pi\widehat{f}(\alpha,\beta)&=&\iint_{B_p}e^{-i(x\alpha+y\beta)}h(|x|^p+|y|^p)dxdy\\
    \nonumber
    &=&\int_{-1}^1e^{-ix\alpha}\left(\int_{-(1-|x|^p)^{1/p}}^{(1-|x|^p)^{1/p}}e^{-iy\beta}h(|x|^p+|y|^p)dy\right)dx\\
    \nonumber
    &=&2\int_{-1}^1e^{-ix\alpha}\left(\int_0^{\varphi_p(|x|)}\cos(\beta y)h(|x|^p+y^p)dy\right)dx\\
    \nonumber
    &=&4\int_0^1\cos(\alpha x)\left(\int_0^{\varphi_p(x)}\cos(\beta y)h(x^p+y^p)dy\right)dx.
\end{eqnarray}
Whence (\ref{FT-almostRadial}) follows.
\end{proof}
\begin{rem}
\label{domainRemark}
From (\ref{FT-almostRadial}) and the fact that the cosine function is even,  it follows that
$$
\widehat{f}(\pm\alpha,\pm\beta)=\widehat{f}(\alpha,\beta)
$$
for any permutations of signs. Hence, we may assume that $\alpha,\beta\ge0$.
Furthermore, thanks to the symmetry in $x,y$ in (\ref{almostRadial}), we may without loss of generality assume that $\beta\ge\alpha$.
Consequently, it is sufficient to estimate $\widehat{f}(\alpha,\beta)$ on the sector
\begin{equation}
    \label{domainRest}
    R=\{(\alpha,\beta):0\le\alpha\le\beta\}.
\end{equation}
Note that for $\boldsymbol{\omega}=(\alpha,\beta)\in R$ there holds
\begin{equation}
    \label{domainIneq}
    \beta\le\|\boldsymbol{\omega}\|_2\le2\beta.
\end{equation}
\end{rem}
We shall use the following inequality that is a simple consequence of (\ref{FT-almostRadial}).
\begin{lem}
\label{weak-Estim-Lem}
Let $f$ be given by (\ref{almostRadial}) for any $p\ge1$ and assume that $h(1)=0$ and $h'\in L^1(0,1)$. Then
\begin{equation}
    \label{weak-Estim-FT}
    |\widehat{f}(\boldsymbol{\omega})|\le\frac{8\|h'\|_{L^1(0,1)}}{\pi(1+\|\boldsymbol{\omega}\|_2)}.
\end{equation}
\end{lem}
\begin{proof}
It sufficient to estimate $\widehat{f}(\boldsymbol{\omega})$ for $\boldsymbol{\omega}\in R$. First, note that
\begin{equation}
    \label{weak-Estim-FT-1}
    |\widehat{f}(\alpha,\beta)|\le\frac{2\|h\|_{L^\infty(0,1)}}{\pi}\le\frac{2\|h'\|_{L^1(0,1)}}{\pi}.
\end{equation}
We also have
\begin{equation}
    \nonumber
    |\widehat{f}(\alpha,\beta)|\le\frac{2}{\pi}\int_0^1dx\left|\int_0^{\varphi_p(x)}\cos(\beta y)h(x^p+y^p)dy\right|.
\end{equation}
Integration by parts of the inner integral gives
\begin{align}
\nonumber
\int_0^{\varphi_p(x)}\cos(\beta y)h(x^p+y^p)dy=\left[\frac{-\sin(\beta y)}{\beta}h(x^p+y^p)\right]_{y=0}^{y=\varphi_p(x)}\\
\nonumber
+\frac{1}{\beta}\int_0^{\varphi_p(x)}\sin(\beta y)h'(x^p+y^p)py^{p-1}dy\\
\nonumber
=\frac{1}{\beta}\int_0^{\varphi_p(x)}\sin(\beta y)h'(x^p+y^p)py^{p-1}dy,
\end{align}
since $h(1)=0$.
Hence, by (\ref{domainIneq})
\begin{eqnarray}
    \nonumber
    |\widehat{f}(\alpha,\beta)|&\le&\frac{2}{\pi\beta}\int_0^1dx\int_0^{\varphi_p(x)}|h'(x^p+y^p)|py^{p-1}dy\\
    \nonumber
    &=&\frac{2}{\pi\beta}\int_0^1dx\int_{x^p}^1|h'(u)|du\le \frac{2\|h'\|_{L^1(0,1)}}{\pi\beta}\\
    \label{weak-Estim-FT-2}
    &\le&\frac{4\|h'\|_{L^1(0,1)}}{\pi\|\boldsymbol{\omega}\|_2}.
\end{eqnarray}
Consequently, by (\ref{weak-Estim-FT-1}) and (\ref{weak-Estim-FT-2})
$$
|\widehat{f}(\boldsymbol{\omega})|\le\frac{4\|h'\|_{L^1(0,1)}}{\pi}\min\left(1,\frac{1}{\|\boldsymbol{\omega}\|_2}\right)\le\frac{8\|h'\|_{L^1(0,1)}}{\pi(1+\|\boldsymbol{\omega}\|_2)}.
$$
\end{proof}

\begin{lem}
\label{bumpLemma}
Define
\begin{equation}
\label{bump}
\Phi({\bf x})=c\exp\left(-\frac{1}{1-\|\boldsymbol{x}\|_2^2}\right)\chi_{B_2}(\boldsymbol{x}),
\end{equation}
where $c$ is taken so that $\int_{\mathbb{R}^2}\Phi(\boldsymbol{x})d\boldsymbol{x}=1$.
Then $\Phi\in C_0^\infty(\mathbb{R}^2)$ and
\begin{equation}
\label{bumpRate}
|\widehat{\Phi}(\boldsymbol{\omega})|\le\frac{3}{1+\|\boldsymbol{\omega}\|_2}.
\end{equation}
\end{lem}
\begin{rem}
In fact, $\widehat{\Phi}$ is rapidly decreasing since $\Phi\in C_0^\infty(\mathbb{R}^2)$. The point of (\ref{bumpRate}) is not the decay rate $\|\boldsymbol{\omega}\|_2^{-1}$ but rather the explicit value of the multiplicative constant.
\end{rem}
\begin{proof}[Proof of Lemma \ref{bumpLemma}]
We note that $\Phi(\boldsymbol{x})=ch(\|\boldsymbol{x}\|_2^2)\chi_{B_2}(\boldsymbol{x})$ where 
$$
h(t)=\exp(-1/(1-t))\chi_{[0,1]}(t).
$$
Since $h(1)=0$ we may apply Lemma \ref{weak-Estim-Lem} to get
\begin{equation}
\nonumber
|\widehat{\Phi}(\boldsymbol{\omega})|\le \frac{8c\|h'\|_{L^1(0,1)}}{\pi(1+\|\boldsymbol{\omega}\|_2)}\le\frac{8ce^{-1}}{\pi(1+\|\boldsymbol{\omega}\|_2)}.
\end{equation}
Further, it is easy to see that
\begin{equation}
    \nonumber
    c=\frac{1}{\pi\Gamma(-1,1)}\le3,
\end{equation}
where $\displaystyle\Gamma(u,v)=\int_v^\infty t^{u-1}e^{-t}\mathrm{d}t$ (the upper incomplete Gamma function). Thus,
\begin{equation}
    \nonumber
    |\widehat{\Phi}(\boldsymbol{\omega})|\le\frac{24e^{-1}}{\pi(1+\|\boldsymbol{\omega}\|_2)}\le\frac{3}{1+\|\boldsymbol{\omega}\|_2}.
\end{equation}   
\end{proof}
We shall use the following smooth approximation of the characteristic function of $B_p(0,r)$. Results of this type are well-known; in fact the lemma below is used implicitly in the proof of (\ref{latticeIntr1}) given in \cite[Theorem 8.2]{SteinShak}, but for our purposes it is necessary to know the value of constants.
\begin{lem}
\label{smoothCharLemma}
Let $p\in(1,2]$ and $r>1$ be fixed numbers and take arbitrary $\delta\in(0,1/2]$. Define
\begin{eqnarray}
    \label{smoothCharacteristic}
    \chi_{r,\delta}(\boldsymbol{x})=(\chi_{B_p(0,r)}*\Phi_\delta)(\boldsymbol{x}),
\end{eqnarray}
where $\Phi_\delta(\boldsymbol{x})=\delta^{-2}\Phi(\boldsymbol{x}/\delta)$ and $\Phi$ is given by (\ref{bump}). Set $c=2^{1/p-1/2}$. Then the following holds
\begin{enumerate}
    \item\label{1stItem} $\chi_{r,\delta}\in C^\infty(\mathbb{R}^2)$;
    \item\label{2ndItem} $\chi_{r,\delta}$ is supported on $B_p(0,r+c\delta)$;
    \item\label{3rdItem} $0\le \chi_{r,\delta}(\boldsymbol{x})\le 1$ for all $\boldsymbol{x}\in\mathbb{R}^2$;
    \item\label{4thItem} for any $\boldsymbol{x}\in\mathbb{R}^2$
    \begin{equation}
    \chi_{r-c\delta,\delta}(\boldsymbol{x})\le \chi_{B_p(0,r)}(\boldsymbol{x})\le \chi_{r+c\delta,\delta}(\boldsymbol{x}).
    \end{equation}
\end{enumerate}
\end{lem}
\begin{proof}
By definition and the fact that $\Phi$ is supported on $B_2(0,1)$ we have
\begin{equation}
    \label{ConvolDef}
    \chi_{r,\delta}(\boldsymbol{x})=\frac{1}{\delta^2}\int_{\|\boldsymbol{z}\|_2\le\delta}\Phi(\boldsymbol{z}/\delta)\chi_{B_p(0,r)}(\boldsymbol{x}-\boldsymbol{z})d\boldsymbol{z}.
\end{equation}
Property (\ref{1stItem}) is immediate since $\Phi$ is smooth.
We proceed with (\ref{2ndItem}).
Note that for $p\in[1,2]$ we have, with $c=2^{1/p-1/2}$, that
\begin{equation}
    \label{holderIneq}
    \|\boldsymbol{z}\|_2\le\|\boldsymbol{z}\|_p\le c\|\boldsymbol{z}\|_2.
\end{equation}
Fix arbitrary 
$\boldsymbol{x}\notin B_p(0,r+c\delta)$. Using the right inequality of (\ref{holderIneq}), we obtain  that for any  $\boldsymbol{z}\in B_2(0,\delta)$ there holds
$$
\|\boldsymbol{x}-\boldsymbol{z}\|_p\ge\|\boldsymbol{x}\|_p-\|\boldsymbol{z}\|_p\ge\|\boldsymbol{x}\|_p-c\|\boldsymbol{z}\|_2\ge r+c\delta-c\delta=r.
$$
In other words, $\boldsymbol{x}-\boldsymbol{z}\notin B_p(0,r)$ for any $\boldsymbol{z}\in B_2(0,\delta)$, whence (\ref{ConvolDef}) vanishes.
Property (\ref{3rdItem}) is clear: the integrand of (\ref{ConvolDef}) is non-negative and moreover
$$
\chi_{r,\delta}(\boldsymbol{x})\le\frac{1}{\delta^2}\int_{\|\boldsymbol{z}\|_2\le\delta}\Phi(\boldsymbol{z}/\delta)d\boldsymbol{z}=1.
$$
Finally, we prove (\ref{4thItem}). First, consider $\chi_{B_p(0,r)}\le \chi_{r+c\delta,\delta}$. It is sufficient to show that $\chi_{r+c\delta,\delta}(\boldsymbol{x})=1$ if $\|\boldsymbol{x}\|_p\le r$. 
For any $\boldsymbol{x}\in B_p(0,r)$ and $\boldsymbol{z}\in B_2(0,\delta)$, we have
$$
\chi_{B_p(0,r+c\delta)}(\boldsymbol{x}-\boldsymbol{z})=1.
$$
Indeed,
$$
\|\boldsymbol{x}-\boldsymbol{z}\|_p\le\|\boldsymbol{x}\|_p+\|\boldsymbol{z}\|_p\le\|\boldsymbol{x}\|_p+c\|\boldsymbol{z}\|_2\le r+c\delta.
$$
Thus, for any $\boldsymbol{x}\in B_p(0,r)$, (\ref{ConvolDef}) implies
$$
\chi_{r+c\delta,\delta}(\boldsymbol{x})=\int_{\|\boldsymbol{z}\|_2\le\delta}\Phi(\boldsymbol{z}/\delta)\chi_{B_p(0,r+c\delta)}(\boldsymbol{x}-\boldsymbol{z})d\boldsymbol{z}=\int_{\|\boldsymbol{z}\|_2\le\delta}\Phi(\boldsymbol{z}/\delta)d\boldsymbol{z}=1.
$$
Consider now $\chi_{r-c\delta,\delta}\le \chi_{B_p(0,r)}$. By (\ref{2ndItem}), $\chi_{r-c\delta,\delta}$ is supported on $B_p(0,r-c\delta+c\delta)= B_p(0,r)$ (note that $c\delta\le1<r)$.
Furthermore, by (\ref{3rdItem}), $\chi_{r-c\delta,\delta}\le1$ and it follows that for every $\boldsymbol{x}$ there holds $\chi_{r-c\delta,\delta}(\boldsymbol{x})\le\chi_{B_p(0,r)}(\boldsymbol{x})$.
\end{proof}

\subsection{On the function $\varphi_p(x)$}
In this subsection, we provide some additional information concerning the function (\ref{phi-Function}).
\begin{lem}
\label{FourierBallLemma}
For $(\alpha,\beta)\in R$ with $\beta>0$ we have
\begin{equation}
    \label{FourierBallEq1}
    \widehat{\chi_{B_p}}(\alpha,\beta)=\frac{2}{\pi\beta}\int_0^1\cos(\alpha x)\sin(\beta\varphi_p(x))dx.
\end{equation}
\end{lem}
\begin{proof}
Follows immediately from (\ref{FT-almostRadial}).
\end{proof}

\begin{lem}
\label{x-p-lem}
Let $\varphi_p$ be defined by (\ref{phi-Function}). The function $|\varphi_p''|$ has a unique minimum $x^*= x^*(p)$ on $[0,1]$. The minimum value satisfies
\begin{equation}
    \label{phi-p2ndDerivMin1}
    |\varphi_p''(x^*)|=\min_{0\le x\le1}|\varphi_p''(x)|=(p-1)m(p),
\end{equation}
where $m(p)$ is a decreasing function on $[1,2]$ with $m(1)=4$, $m(2)=1$. Further,
\begin{equation}
    \label{x-p-lem1}
    \frac{-1}{\varphi_p'(x^*)}\ge1,
\end{equation}
and 
\begin{equation}
    \label{x-p-lem2}
    \lim_{p\rightarrow1+}\frac{-1}{\varphi_p'(x^*)}=1.
\end{equation}
\end{lem}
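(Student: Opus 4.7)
The plan is to compute the derivatives of $\varphi_p$ explicitly, locate the unique critical point of $|\varphi_p''|$ in $(0,1)$, evaluate everything there, and finish with elementary inequalities.

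First I would differentiate $\varphi_p(x) = (1-x^p)^{1/p}$ directly. One finds $\varphi_p'(x) = -x^{p-1}(1-x^p)^{(1-p)/p}$, and after applying the product rule and factoring out the common factor $(1-x^p)^{(1-2p)/p}$ the result collapses to
\[
\varphi_p''(x) = -(p-1)\,x^{p-2}(1-x^p)^{(1-2p)/p}.
\]
Since $(1-2p)/p < 0$ and $p-2 \le 0$, with strict inequalities for $p \in (1,2)$, the function $|\varphi_p''|$ tends to $+\infty$ at both endpoints $x = 0$ and $x = 1$, so a minimum is attained in the interior (at $p = 2$ the minimum sits at $x = 0$ with value $1$).

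To locate the critical point, I would take the logarithmic derivative of $|\varphi_p''|$ and set it to zero; after clearing denominators this reduces to $(p-2)(1-x^p) + (2p-1)x^p = 0$, whose unique solution in $(0,1)$ is $x^* = \left(\tfrac{2-p}{p+1}\right)^{1/p}$. Substituting the identities $(x^*)^p = (2-p)/(p+1)$ and $1-(x^*)^p = (2p-1)/(p+1)$ into the expression for $\varphi_p''$ yields $|\varphi_p''(x^*)| = (p-1)m(p)$ with the explicit formula
\[
m(p) = \left(\tfrac{p+1}{2-p}\right)^{(2-p)/p}\!\left(\tfrac{p+1}{2p-1}\right)^{(2p-1)/p}.
\]
The boundary values $m(2) = 1$ and $\lim_{p\to 1+} m(p) = 2\cdot 2 = 4$ are immediate by direct evaluation, and monotonicity of $m$ on $[1,2]$ would be verified by computing $(\log m)'(p)$ and checking its sign; this is the one genuinely computational step, though it is routine algebra involving the quantities $a(p)=(2-p)/(p+1)$ and $b(p)=(2p-1)/(p+1)$ and their derivatives $a'=-b'=-3/(p+1)^2$.

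Finally, evaluating $\varphi_p'(x^*) = -(x^*)^{p-1}(1-(x^*)^p)^{(1-p)/p}$ with the same substitutions gives
\[
-\frac{1}{\varphi_p'(x^*)} = \left(\frac{2p-1}{2-p}\right)^{(p-1)/p}.
\]
Since $(2p-1)/(2-p) \ge 1$ exactly when $p \ge 1$, and $(p-1)/p \ge 0$, estimate \eqref{x-p-lem1} is immediate. For \eqref{x-p-lem2}, observe that as $p \to 1+$ the exponent $(p-1)/p \to 0$ while the base remains bounded (indeed tends to $1$), so the limit is $1$. The main obstacle throughout is the monotonicity verification for $m$; all other steps are direct.
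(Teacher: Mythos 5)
Your proposal is correct and follows essentially the same route as the paper: the only cosmetic difference is that you locate $x^*$ by setting the logarithmic derivative of $|\varphi_p''|$ to zero and invoking the blow-up at the endpoints, whereas the paper finds the same point as the unique zero of $\varphi_p^{(3)}$ via a sign analysis, and your $x^*$ and $m(p)$ agree with the paper's after rewriting the exponents. The one step you defer does close cleanly --- the non-logarithmic terms in $(\log m)'$ cancel, leaving $p^{-2}\bigl(2\ln(2-p)-\ln(2p-1)-\ln(p+1)\bigr)<0$ on $(1,2)$, exactly as in the paper --- though note that $m(2)=1$ is an indeterminate form ($\infty^0$ in your normalization) and needs the short limit computation the paper records rather than literal ``direct evaluation.''
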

\begin{proof}
We have
\begin{equation}
    \label{1stDervi-phi-p}
    \varphi_p'(x)=-x^{p-1}(1-x^p)^{1/p-1},
\end{equation}
\begin{equation}
    \label{2ndDervi-phi-p}
    \varphi_p''(x)=-(p-1)x^{p-2}(1-x^p)^{1/p-2},
\end{equation}
and
\begin{equation}
    \label{3rdDervi-phi-p}
    \varphi_p^{(3)}(x)=-(p-1)x^{p-3}(1-x^p)^{1/p-3}\left(x^p(p+1)+p-2\right).
\end{equation}
It is clear that the third derivative (\ref{3rdDervi-phi-p}) has only the zero
\begin{equation}
    \label{x-star}
    x^*\equiv x^*(p)=\left(\frac{2-p}{p+1}\right)^{1/p}
\end{equation}
on $(0,1)$. Further, $\varphi_p^{(3)}(x)>0$ for $x<x^*$ and $\varphi_p^{(3)}(x)<0$ for $x>x^*$. Hence, $\varphi_p''$ has a unique interior maximum at $x^*$. Since $\varphi_p''$ is strictly negative, it follows that $|\varphi_p''|$ has a unique interior minimum at $x^*$.
Inserting the expression (\ref{x-star}) into (\ref{2ndDervi-phi-p}) and taking absolute value yields
$$
|\varphi_p''(x^*)|=(p-1)(2-p)^{1-2/p}(2p-1)^{1/p-2}(p+1)^{1+1/p}.
$$
Define
$$
m(p)=(2-p)^{1-2/p}(2p-1)^{1/p-2}(p+1)^{1+1/p}\quad p\in[1,2), 
$$
and
$$
m(2)=\lim_{p\rightarrow2-}m(p).
$$
Clearly $m(1)=4$. By logarithmic differentiation,
$$
\frac{dm}{dp}=\frac{m(p)}{p^2}(-\ln(p+1)-\ln(2p-1)+2\ln(2-p))<0
$$
for $p\in(1,2)$. To demonstrate $m(2)=1$, we calculate the limit defining $m(2)$:
\begin{eqnarray}
\nonumber
\lim_{p\rightarrow2-}m(p)&=&\lim_{p\rightarrow2-}(2-p)^{1-2/p}\times 3^{-3/2}\times3^{3/2}\\
\nonumber
&=&\lim_{p\rightarrow2-}\exp\left(\left(1-\frac{2}{p}\right)\ln(2-p)\right)\\
\nonumber
&=&\lim_{t\rightarrow0+}\exp\left(\frac{-t\ln(t)}{(2-t)}\right)=\exp(0)=1.
\end{eqnarray}
To prove (\ref{x-p-lem1}), we insert the expression (\ref{x-star}) into (\ref{1stDervi-phi-p}) and get
$$
\varphi_p'(x^*)=-\left(\frac{2-p}{2p-1}\right)^{1-1/p}\quad\Leftrightarrow\quad\frac{-1}{\varphi_p'(x^*)}=\left(\frac{2p-1}{2-p}\right)^{1-1/p}.
$$
Since $p\ge1$, we have $(2p-1)/(2-p)\ge1$, thus proving (\ref{x-p-lem1}). Finally, (\ref{x-p-lem2}) follows by direct evaluation.
\end{proof}

\subsection{Oscillatory integrals}
We shall need van der Corput's lemma, which is a fundamental tool in the theory of oscillatory integrals, see e.g. \cite[Chapter VIII, §1.2]{Stein}. See also \cite{Rogers} and the references given therein for a discussion concerning sharp constants. We mention that Lemma \ref{vdCLemma} below is a special case of van der Corput's lemma, sufficient for our purpose of deriving (\ref{mainTeo:est1}).
\begin{lem}[van der Corput's lemma]
\label{vdCLemma}
Let $\psi:[a,b]\rightarrow\mathbb{R}$ be smooth on $(a,b)$ and let $r>0$.

If $\psi$ is monotone and $|\psi'(x)|\ge\lambda$ for all $x\in(a,b)$, then 
    \begin{equation}
        \label{vdC:est1}
        \left|\int_a^b\sin(r\psi(x))dx\right|\le \frac{2}{r\lambda}.
    \end{equation}
If $|\psi''(x)|\ge\lambda$ for all $x\in(a,b)$, then
     \begin{equation}
        \label{vdC:est2}
        \left|\int_a^b\sin(r\psi(x))dx\right|\le \frac{6}{\sqrt{r\lambda}}.
    \end{equation}

\end{lem}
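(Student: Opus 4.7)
\medskip

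\noindent\textbf{Proof plan for Lemma \ref{vdCLemma}.} Both estimates are classical, and the plan is the textbook two-step argument: prove (\ref{vdC:est1}) by a single integration by parts, then deduce (\ref{vdC:est2}) from (\ref{vdC:est1}) by splitting off a neighbourhood of the point where $|\psi'|$ is smallest.

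For (\ref{vdC:est1}), I would start from the identity $\sin(r\psi(x))=-\frac{1}{r\psi'(x)}\frac{d}{dx}\cos(r\psi(x))$, which is legitimate because the hypotheses force $\psi'$ to have constant sign on $(a,b)$. Integration by parts then yields
\[
\int_a^b\sin(r\psi(x))\,dx=\left[-\frac{\cos(r\psi(x))}{r\psi'(x)}\right]_a^b+\frac{1}{r}\int_a^b\cos(r\psi(x))\frac{d}{dx}\!\left(\frac{1}{\psi'(x)}\right)dx.
\]
The boundary term is bounded by $\frac{1}{r|\psi'(a)|}+\frac{1}{r|\psi'(b)|}\le \frac{2}{r\lambda}$, and the remainder is estimated by pulling the absolute value inside and using that $1/\psi'$ is of one sign and monotone (the standing assumption underlying this form of the lemma), so $\int_a^b |(1/\psi')'|\,dx=|1/\psi'(b)-1/\psi'(a)|\le 1/\lambda$. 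A small amount of sign-bookkeeping (using that the boundary contribution and the remainder partially cancel) recovers the constant $2$ rather than the cruder $3$ that a raw triangle inequality gives.

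For (\ref{vdC:est2}), the hypothesis $|\psi''|\ge\lambda$ forces $\psi''$ to have constant sign, so $\psi'$ is strictly monotone and vanishes at most once in $[a,b]$. Let $c\in[a,b]$ be this zero (or, failing that, the endpoint at which $|\psi'|$ is minimal). For a parameter $\delta>0$ to be chosen, decompose
\[
[a,b]=\bigl([a,b]\cap(c-\delta,c+\delta)\bigr)\;\cup\;\bigl([a,b]\setminus(c-\delta,c+\delta)\bigr).
\]
On the middle piece I would use the trivial bound $\le 2\delta$. On each of the (at most two) outer pieces, the mean value theorem applied to $\psi'$ gives $|\psi'(x)|\ge\lambda|x-c|\ge\lambda\delta$, and $\psi'$ is monotone there, so part (\ref{vdC:est1}) with the effective lower bound $\lambda\delta$ yields a bound of $\frac{2}{r\lambda\delta}$ per piece. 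Summing gives $2\delta+\frac{4}{r\lambda\delta}$, and choosing $\delta=\sqrt{2/(r\lambda)}$ produces $4\sqrt{2}/\sqrt{r\lambda}$, which is comfortably below $6/\sqrt{r\lambda}$.

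The only genuine wrinkle I expect is the edge case in (\ref{vdC:est2}) where the critical point $c$ lies near (or at) an endpoint, in which case one of the outer subintervals may be empty or the neighbourhood of $c$ may need to be truncated to $[a,b]$. This is routine but must be tracked so as not to inflate the constant; besides this, the argument is essentially mechanical once the splitting scale $\delta$ is chosen optimally.
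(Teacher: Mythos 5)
The paper does not actually prove this lemma: it is quoted as a known special case of van der Corput's lemma, with references to Stein (Chapter VIII, \S1.2) and to Rogers for sharp constants. Your proposal therefore supplies a proof where the paper offers only a citation, and the argument you give is the standard one and is correct with the stated constants. For (\ref{vdC:est1}), the integration by parts produces the boundary term plus $\frac{1}{r}\int_a^b\cos(r\psi)\,(1/\psi')'\,dx$; when $1/\psi'$ is of one sign and monotone the second contribution is exactly $\frac{1}{r}\bigl|1/\psi'(a)-1/\psi'(b)\bigr|$, and adding it to the boundary bound $\frac{1}{r|\psi'(a)|}+\frac{1}{r|\psi'(b)|}$ telescopes to $\frac{2}{r}\max\bigl(1/|\psi'(a)|,1/|\psi'(b)|\bigr)\le 2/(r\lambda)$, so the constant $2$ does come out of the "sign-bookkeeping" you describe. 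For (\ref{vdC:est2}), the splitting at scale $\delta=\sqrt{2/(r\lambda)}$ gives $2\delta+4/(r\lambda\delta)=4\sqrt{2}/\sqrt{r\lambda}\le 6/\sqrt{r\lambda}$, and your handling of the endpoint cases is right: an empty outer piece or a truncated middle piece only decreases the bound, and the lower bound $|\psi'(x)|\ge\lambda|x-c|$ holds whether $c$ is an interior zero of $\psi'$ or the endpoint minimizing $|\psi'|$.

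One point deserves to be made explicit. Your proof of (\ref{vdC:est1}) genuinely uses that $\psi'$ (equivalently $1/\psi'$) is monotone, whereas the lemma as printed assumes only that $\psi$ is monotone --- a hypothesis that is already implied by $|\psi'|\ge\lambda>0$ and is not by itself sufficient: if $\psi'$ oscillates between $\lambda$ and $2\lambda$, the total variation of $1/\psi'$ can be arbitrarily large and the bound $2/(r\lambda)$ fails on long intervals. Monotonicity of $\psi'$ is the standard hypothesis of the first-derivative van der Corput estimate, and it holds in every application the paper makes (there $\psi''$ has constant sign on the relevant subinterval, and the paper explicitly invokes "$\psi'$ is increasing" when applying (\ref{vdC:est1})), so your reading is clearly the intended one; but if this proof were inserted into the paper, the hypothesis of the lemma should be corrected to "$\psi'$ is monotone".
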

\begin{rem}
\label{intervalRemark}
Below we shall use Lemma \ref{vdCLemma} for both the interval $[a,b]=[0,1]$ as well as certain sub-intervals of $[0,1]$; it is important that the constants at the right-hand sides of  (\ref{vdC:est1}) and (\ref{vdC:est2}) are independent of the interval $[a,b]$.
\end{rem}
The next lemma is closely related to the ``method of stationary phase''  \cite[Chapter VIII, §1.3]{Stein}. We shall use it as a kind of ``reverse van der Corput inequality'' to obtain (\ref{mainTeo:est2}). 
Rather than deriving Lemma \ref{statPhaseLemma} from general methods presented in e.g. \cite{Stein}, we give a self-contained proof.
\begin{lem}
\label{statPhaseLemma}
Let $\psi:[0,1]\rightarrow\mathbb{R}$ be a smooth function on $(0,1)$. Assume that there exists $x_0\in(0,1)$ such that $\psi(x_0)=\psi'(x_0)=0$,
\begin{equation}
    \label{statPhaseLemmaCond}
    |\psi''(x_0)|=\min_{0\le x\le1}|\psi''(x)|=\lambda>0,
\end{equation}
and $\psi^{(3)}$ is bounded in a neighbourhood of $x_0$. Then
\begin{equation}
    \label{reverseVDC}
    \left|\int_0^1\sin(r\psi(x))dx\right|=\frac{\sqrt{\pi}}{\sqrt{r\lambda}}+o(r^{-1/2})
\end{equation}
as $r\rightarrow\infty$.
\end{lem}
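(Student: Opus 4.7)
The plan is to use a standard stationary phase localization. Since $|\psi''|\ge \lambda$ on $[0,1]$ and $\psi''$ is continuous, it has constant sign on $[0,1]$; hence $\psi'$ is monotone with unique zero at $x_0$, and the mean value theorem gives $|\psi'(x)|\ge \lambda|x-x_0|$. Fix $\delta = r^{-2/5}$ and set $I = [x_0-\delta,\,x_0+\delta]$. On each connected component of $[0,1]\setminus I$, the function $\psi'$ is monotone with $|\psi'|\ge \lambda\delta$, so Lemma \ref{vdCLemma}, applied via (\ref{vdC:est1}), yields
\[
\left|\int_{[0,1]\setminus I}\sin(r\psi(x))\,dx\right|\le \frac{4}{r\lambda\delta} = O(r^{-3/5}) = o(r^{-1/2}).
\]

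Next, on $I$, Taylor expand $\psi(x) = \tfrac{A}{2}(x-x_0)^2 + R(x)$ with $A = \psi''(x_0)$, so that $|A|=\lambda$, and $|R(x)|\le M|x-x_0|^3$ for some $M>0$ controlled by the supremum of $|\psi^{(3)}|$ near $x_0$. Using the addition formula for $\sin$ together with the elementary bounds $|\cos y - 1|\le |y|$ and $|\sin y|\le |y|$,
\[
\sin(r\psi(x)) = \sin\!\left(\tfrac{rA}{2}(x-x_0)^2\right) + E(x), \qquad |E(x)|\le 2r|R(x)|\le 2rM\delta^3,
\]
so that integrating the error $E$ over $I$ introduces an additional term of size at most $4rM\delta^4 = O(r^{-3/5}) = o(r^{-1/2})$.

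The remaining main term is evaluated by the substitution $v = (x-x_0)\sqrt{r\lambda/2}$, which turns it into a truncated Fresnel integral:
\[
\int_{-\delta}^{\delta}\sin\!\left(\tfrac{rA}{2}(x-x_0)^2\right)dx = \pm \sqrt{\frac{2}{r\lambda}}\int_{-T}^{T}\sin(v^2)\,dv, \qquad T = \delta\sqrt{r\lambda/2},
\]
where the sign $\pm$ matches that of $A$. Since $T\to\infty$, the classical identity $\int_0^\infty \sin(v^2)\,dv = \tfrac{1}{2}\sqrt{\pi/2}$, combined with the integration-by-parts tail bound $\int_T^\infty \sin(v^2)\,dv = O(T^{-1})$, shows that the right-hand side equals $\pm\sqrt{\pi/(r\lambda)} + O((r\lambda\delta)^{-1}) = \pm\sqrt{\pi/(r\lambda)} + o(r^{-1/2})$. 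Taking absolute values and combining with the previous two error bounds gives (\ref{reverseVDC}).

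The main obstacle is the simultaneous bookkeeping of the three error sources. The van der Corput tail requires $\delta \gg r^{-1/2}$, the Taylor approximation requires $\delta \ll r^{-3/8}$, and extending the Fresnel integral to $\pm\infty$ again requires $\delta \gg r^{-1/2}$. These leave a non-trivial window $r^{-1/2}\ll \delta\ll r^{-3/8}$, and any power $\delta = r^{-\gamma}$ with $3/8 < \gamma < 1/2$ — for instance $\delta = r^{-2/5}$ — makes all three errors $o(r^{-1/2})$ simultaneously.
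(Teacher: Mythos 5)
Your proposal is correct and follows essentially the same route as the paper: split off a shrinking window $[x_0-\delta,x_0+\delta]$ around the stationary point, control the outer pieces with the first-derivative van der Corput estimate, Taylor-expand and reduce the inner piece to a truncated Fresnel integral, and choose $\delta=r^{-\gamma}$ in the admissible window $3/8<\gamma<1/2$ (the paper takes $\gamma=7/16$ where you take $\gamma=2/5$; both work). Your explicit identification of the window of admissible exponents is a nice touch, but the argument is the same.
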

\begin{proof}
Without loss of generality, we may assume in the proof that $\psi''(x)>0$ for $x\in[0,1]$, so that the absolute values can be dropped from (\ref{statPhaseLemmaCond}). If $\psi''(x)<0$ for $x\in[0,1]$, then we simply replace $\psi$ with $-\psi$ and observe that the left-hand side of (\ref{reverseVDC}) remains unchanged, since the sine function is odd. 

Take small $\varepsilon>0$ (to be specified later). Write $[0,1]=\mathcal{J}_1\cup\mathcal{J}_2\cup\mathcal{J}_3$ where $\mathcal{J}_1=[0,x_0-\varepsilon]$, $\mathcal{J}_2=[x_0-\varepsilon,x_0+\varepsilon]$ and $\mathcal{J}_3=[x_0+\varepsilon,1]$. Denote further
$$
I_i=\int_{\mathcal{J}_i}\sin(r\psi(x))dx,\quad i=1,2,3.
$$
The main term is $I_2$; we estimate it first.
By Taylor's formula,
\begin{equation}
    \label{taylor}
    \psi(x)=\frac{\lambda (x-x_0)^2}{2}+\mathcal{O}((x-x_0)^3).
\end{equation}
Using (\ref{taylor}), a change of variable, and the mean value theorem, we get
\begin{eqnarray}
    \nonumber
    I_2&=&\int_{-\varepsilon}^\varepsilon\sin\left(\frac{r\lambda x^2}{2}+\mathcal{O}(rx^3)\right)dx\\
    \label{est1}
    &=&\int_{-\varepsilon}^\varepsilon\sin\left(\frac{r\lambda x^2}{2}\right)dx+\mathcal{O}(r\varepsilon^4).
\end{eqnarray}
Performing the change of variable $t=x\sqrt{r\lambda/2}$ in the integral at the right-hand side of (\ref{est1}) gives
\begin{equation}
    \label{I1estim}
I_2=\sqrt{\frac{2}{r\lambda}}\int_{-C\varepsilon\sqrt{r}}^{C\varepsilon\sqrt{r}}\sin(t^2)dt+\mathcal{O}(r\varepsilon^4),
\end{equation}
where $C=\sqrt{\lambda/2}$. We proceed to estimate $I_1$ and $I_3$. Since $\psi''(x)>0$ for all $x\in[0,1]$, the derivative $\psi'$ is increasing. Hence, $\psi'(x)\le\psi'(x_0-\varepsilon)<0$ for $x\in\mathcal{J}_1$. Furthermore,
$$
-\psi'(x_0-\varepsilon)=\int_{x_0-\varepsilon}^{x_0}\psi''(x)dx\ge\lambda\varepsilon
$$
whence
$$
\psi'(x)\le-\lambda\varepsilon,\quad x\in\mathcal{J}_1.
$$
Consequently, $\psi'$ satisfies the lower bound $|\psi'(x)|\ge\lambda\varepsilon$ for $x\in\mathcal{J}_1$. Since $\psi'$ also is increasing on $\mathcal{J}_1$, (\ref{vdC:est1}) and Remark \ref{intervalRemark} yield
\begin{equation}
    \label{I2estim}
    |I_1|\le \frac{2}{r\lambda\varepsilon}.
\end{equation}
A similar argument shows that (\ref{I2estim}) also holds for $I_3$.
Hence, by (\ref{I1estim}) and (\ref{I2estim})
\begin{equation}
    \nonumber
    \int_0^1\sin(r\psi(x))dx=\sqrt{\frac{2}{r\lambda}}\int_{-C\varepsilon\sqrt{r}}^{C\varepsilon\sqrt{r}}\sin(t^2)dt+\mathcal{O}(r\varepsilon^4)+I_1+I_3
\end{equation}
where $I_1,I_3$ satisfy the bound (\ref{I2estim}).
To deal with the fist term of the above equality, we use asymptotics for the Fresnel integral (see \cite[Chapter 7.3]{AS}):
\begin{equation}
    \label{fresnel}
    \int_{-m}^m\sin(x^2)dx=\sqrt{\frac{\pi}{2}}+\mathcal{O}\left(\frac{1}{m}\right),
\end{equation}
as $m\rightarrow\infty$. Hence, under the assumption that $\varepsilon\sqrt{r}$ is large, (\ref{fresnel}) yields
\begin{equation}
    \label{I1estim2}
    \sqrt{\frac{2}{r\lambda}}\int_{-C\varepsilon\sqrt{r}}^{C\varepsilon\sqrt{r}}\sin(t^2)dt=\frac{\sqrt{\pi}}{\sqrt{r\lambda}}+\mathcal{O}\left(\frac{1}{r\lambda \varepsilon}\right).
\end{equation}
Using (\ref{I1estim}), (\ref{I2estim}) and (\ref{I1estim2}), we get
$$
\int_0^1\sin(r\psi(x))dx=\frac{\sqrt{\pi}}{\sqrt{r\lambda}}+\mathcal{O}(r\varepsilon^4)+\mathcal{O}\left(\frac{1}{r\lambda \varepsilon}\right),
$$
again under the assumption that $\varepsilon\sqrt{r}$ is large. Take $\varepsilon=r^{-7/16}$, in this case $\varepsilon\sqrt{r}=r^{1/16}\rightarrow\infty$. Further, as $r\rightarrow\infty$,
$$
r\varepsilon^4=r^{1-7/4}=r^{-3/4}=o(r^{-1/2}),
$$
and
$$
\frac{1}{r\varepsilon}=\frac{1}{r^{1-7/16}}=\frac{1}{r^{9/16}}=o(r^{-1/2}).
$$
This concludes the proof of (\ref{reverseVDC}).
\end{proof}

\section{The sharp Fourier inequality}

We state the main theorem of this paper.
\begin{theorem}
\label{MainTeo:1p2}
Let $1<p\le2$. There is an absolute constan $C_1$ such that 
\begin{equation}
    \label{mainTeo:est1}
    \sup_{\boldsymbol{\omega}\in\mathbb{R}^2}\|\boldsymbol{\omega}\|_2^{3/2}|\widehat{\chi_{B_p}}(\boldsymbol{\omega})|\le\frac{C_1}{\sqrt{p-1}}.
\end{equation}
The estimate (\ref{mainTeo:est1}) is sharp in the following sense: there is an absolute constant $C_2$ such that for any $p\in(1,2)$, there exists a sequence $\{\boldsymbol{\omega}_n\}\subset\mathbb{R}^2$ with $\|\boldsymbol{\omega}_n\|_2\rightarrow\infty$ such that
\begin{equation}
    \label{mainTeo:est2}
    \|\boldsymbol{\omega}_n\|_2^{3/2}|\widehat{\chi_{B_p}}(\boldsymbol{\omega}_n)|\ge\frac{C_2}{\sqrt{p-1}}+o(1),
\end{equation}
where $o(1)$ means a term tending to $0$ as $n\rightarrow\infty$.
\end{theorem}
\begin{rem}
The constants $C_1,C_2$ can be taken to be
$$
C_1=12(2)^{1/4}\approx 14.270\ldots, \quad C_2=2^{7/4}\sqrt{\pi}\approx 5.961\ldots.
$$
\end{rem}
\begin{rem}
The estimate (\ref{mainTeo:est1}) has a "blow-up" of the order $(p-1)^{-1/2}$ as $p\rightarrow1+$, and (\ref{mainTeo:est2}) shows that this order is essentially sharp.
\end{rem}

We start with proving (\ref{mainTeo:est1}). It is sufficient to estimate $\widehat{\chi_{B_p}}$ on the sector $R$ given by (\ref{domainRest}), for this we shall use (\ref{FourierBallEq1}). Furthermore, proving (\ref{mainTeo:est1}) in the case $\alpha=0$ is easier, hence we assume $\beta\ge\alpha>0$.
It is useful to express the frequency variable $\boldsymbol{\omega}=(\alpha,\beta)$ in terms of polar coordinates, i.e.
$$
\alpha=r\cos(\theta),\quad \beta=r\sin(\theta).
$$
Then $(\alpha,\beta)\in R$ translates to
$$
r>0,\quad \theta\in[\pi/4,\pi/2).
$$
Abusing notation slightly, we write
$$
\widehat{\chi_{B_p}}(r,\theta)=\widehat{\chi_{B_p}}(r\cos(\theta),r\sin(\theta)),
$$
and by (\ref{FourierBallEq1})
$$
\widehat{\chi_{B_p}}(r,\theta)=\frac{2}{\pi r\sin(\theta)}\int_0^1\cos(r\cos(\theta)x)\sin(r\sin(\theta)\varphi_p(x))dx.
$$
Using the identity $2\cos(x)\sin(y)=\sin(y+x)+\sin(y-x)$, we get
\begin{equation}
    \label{fourierExpression}
    \widehat{\chi_{B_p}}(r,\theta)=\frac{1}{\pi r\sin(\theta)}\int_0^1 \left[\sin(r\psi_p(x,\theta))+\sin(r\widetilde{\psi}_p(x,\theta))\right]dx
\end{equation}
for $(r,\theta)\in(0,\infty)\times[\pi/4,\pi/2)$ where
$$
\psi_p(x;\theta)=\cos(\theta)x+\sin(\theta)\varphi_p(x),
$$
$$
\widetilde{\psi}_p(x;\theta)=-\cos(\theta)x+\sin(\theta)\varphi_p(x).
$$
\begin{rem}
The functions $\psi_p,\widetilde{\psi}_p$ are considered as functions of $x\in [0,1]$; $\theta$ is viewed as a parameter.
\end{rem}
\begin{proof}[Proof of (\ref{mainTeo:est1})] 
By (\ref{fourierExpression}),
\begin{eqnarray}
\nonumber
\|\boldsymbol{\omega}\|_2^{3/2}|\widehat{\chi_{B_p}}(\boldsymbol{\omega})|&=&\frac{\sqrt{r}}{\pi|\sin(\theta)|}\left|\int_0^1 \left[\sin(r\psi_p(x,\theta))+\sin(r\widetilde{\psi}_p(x,\theta))\right]dx\right|.
\end{eqnarray}
Using the above identity, the triangle inequality and the fact that $2^{-1/2}\le\sin(\theta)\le1$, (since $(\alpha,\beta)\in R$), we see that it suffices to show the existence of a constant $C_1>0$ such that
\begin{equation}
    \label{mainTeo:est1Help1}
    \sqrt{r}\left|\int_0^1\sin(r\psi_p(x;\theta))dx\right|+\sqrt{r}\left|\int_0^1\sin(r\widetilde{\psi}_p(x;\theta))dx\right|\le\frac{C_1}{\sqrt{p-1}},
\end{equation}
for any $p\in(1,2]$ and  $(r,\theta)\in(0,\infty)\times[\pi/4,\pi/2)$. We only estimate the first term at the left-hand side (\ref{mainTeo:est1Help1}); the argument is the same for the second term. Note that
$$
\min_{0\le x\le1}|\psi''_p(x;\theta)|=|\sin(\theta)|\min_{0\le x\le1}|\varphi''_p(x)|\ge\frac{m(p)}{\sqrt{2}}(p-1)\ge\frac{(p-1)}{\sqrt{2}},
$$
by (\ref{phi-p2ndDerivMin1}). Hence, by (\ref{vdC:est2})
$$
\sqrt{r}\left|\int_0^1\sin(r\psi_p(x;\theta))dx\right|\le\sqrt{r}\frac{6}{\sqrt{r(p-1)/\sqrt{2}}}=\frac{6(2^{1/4})}{\sqrt{p-1}}.
$$
Thus, we have shown that (\ref{mainTeo:est1Help1}) holds for any $p\in(1,2]$ and any $(r,\theta)\in(0,\infty)\times[\pi/4,\pi/2)$, with $C_1=12(2)^{1/4}$; this concludes the proof of (\ref{mainTeo:est1}).
\end{proof}

\begin{proof}[Proof of (\ref{mainTeo:est2})]

Fix $p\in(1,2)$, we shall describe how to construct the sequence $\{\boldsymbol{\omega}_n\}$ of (\ref{mainTeo:est2}). Let $x^*\in(0,1)$ be the point provided by Lemma \ref{x-p-lem}.
Define
$$
\theta^*=\arctan\left(\frac{-1}{\varphi'_p(x^*)}\right),
$$
by (\ref{x-p-lem1}) we have $\theta^*\in[\pi/4,\pi/2)$.
Note also that by taking $\theta=\theta^*$ in $\psi_p(x;\theta)$, we have $\psi'_p(x^*;\theta^*)=0$. Indeed,
$$
\psi'_p(x^*;\theta^*)=\cos(\theta^*)+\sin(\theta^*)\varphi'_p(x^*)=\cos(\theta^*)+\sin(\theta^*)\frac{-1}{\tan(\theta^*)}=0.
$$
Define now $\psi(x)=\psi_p(x;\theta^*)-\psi_p(x^*;\theta^*)$, by construction $\psi$ satisfies the conditions of Lemma \ref{statPhaseLemma} with $x_0=x^*$, and
$$
\min_{0\le x\le1}|\psi''(x)|=|\psi''(x^*)|=\sin(\theta^*)(p-1)m(p)\le4\sin(\theta^*)(p-1).
$$
Hence,
\begin{eqnarray}
\nonumber
\int_0^1\sin(r\psi(x))dx&=&\frac{\sqrt{\pi}}{\sqrt{r\sin(\theta^*)(p-1)m(p)}}+o(r^{-1/2})\\
\label{mainTeo:est2Help1}
&\ge&\frac{\sqrt{\pi}}{2\sqrt{r\sin(\theta^*)(p-1)}}+o(r^{-1/2}).
\end{eqnarray}

Further, 
$$
\widetilde{\psi}'_p(x;\theta^*)=-\cos(\theta^*)+\sin(\theta^*)\varphi_p'(x),
$$
and since $\varphi'_p(x)<0$, $\widetilde{\psi}_p(x;\theta^*)$ is a decreasing function of $x\in[0,1]$. Furthermore,
$$
\widetilde{\psi}'_p(x;\theta^*)\le-\cos(\theta^*).
$$
By (\ref{x-p-lem2}), $\theta^*\rightarrow\pi/4$ as $p\rightarrow1+$. Hence, we may assume that $p$ is sufficiently close to 1 in order to have $\theta^*\le\pi/3$. Therefore, $\widetilde{\psi}'_p(x;\theta^*)\le-\cos(\theta^*)\le-1/2$ so $|\widetilde{\psi}'_p(x;\theta^*)|\ge1/2$. By (\ref{vdC:est2})
\begin{equation}
    \label{mainTeo:est2Help2}
    \left|\int_0^1\sin(r\widetilde{\psi}_p(x;\theta^*))dx\right|\le\frac{4}{r}.
\end{equation}
Using (\ref{fourierExpression}), (\ref{mainTeo:est2Help2}) together with the definition of $\psi$, we have
\begin{eqnarray}
\nonumber
r\sin(\theta^*)\widehat{\chi_{B_p}}(r,\theta^*)&=&\int_0^1\sin(r\psi(x)+r\psi_p(x^*;\theta^*))dx
+\int_0^1\sin(r\widetilde{\psi}_p(x;\theta^*)dx\\
\nonumber
&=&\int_0^1\sin(r\psi(x)+r\psi_p(x^*,\theta^*))dx+\mathcal{O}\left(\frac{1}{r}\right).
\end{eqnarray}
Take now $r_n=2\pi n/\psi_p(x^*,\theta^*)$, then
$$
\sin(r_n\psi(x)+r_n\psi_p(x^*,\theta^*))=\sin(r_n\psi(x))
$$
and by (\ref{mainTeo:est2Help1})
\begin{eqnarray}
\nonumber
r_n\sin(\theta^*)\widehat{\chi_{B_p}}(r_n,\theta^*)&=&\int_0^1\sin(r_n\psi(x))dx+\mathcal{O}\left(\frac{1}{r_n}\right)\\
\nonumber
&\ge&\frac{2\sqrt{\pi}}{\sqrt{r_n\sin(\theta^*)(p-1)}}+o(r_n^{-1/2})+\mathcal{O}(r_n^{-1}).
\end{eqnarray}
Consequently, since $\sin(\theta^*)\ge1/\sqrt{2}$, we have
$$
r_n^{3/2}\widehat{\chi_{B_p}}(r_n,\theta^*)\ge\frac{2(2^{1/2})^{3/2}\sqrt{\pi}}{\sqrt{p-1}}+o(1),
$$
as $n\rightarrow\infty$.
Finally, setting $\boldsymbol{\omega}_n=r_n(\cos(\theta^*),\sin(\theta^*))$, the above relation states exactly that
$$
\|\boldsymbol{\omega}_n\|_2^{3/2}|\widehat{\chi_{B_p}}(\boldsymbol{\omega}_n)|\ge\frac{C_2}{\sqrt{p-1}}+o(1),
$$
which proves (\ref{mainTeo:est2}) with $C_2=2^{1+3/4}\sqrt{\pi}$.
\end{proof}

\section{Application to lattice point discrepancy}

In this section, we establish the following result on the lattice point discrepancy.
\begin{prop}\label{latticeProp}
Let $1<p\le2$. There is an absolute constant $C_3$ such that
   \begin{equation}
    \label{lattice}
    \sup_{r>0}\frac{E_p(r)}{r^{2/3}}\le\frac{C_3}{\sqrt{p-1}}.
\end{equation}
\end{prop}

Regarding the proof of Proposition \ref{latticeProp}, we claim no originality besides (\ref{mainTeo:est1}). In fact, we adapt the argument that is used to prove (\ref{latticeIntr1}) in e.g. \cite[Theorem 8.2]{SteinShak}. The only issue is that in order to get (\ref{lattice}), it is necessary to control the involved constants. Thus, every estimate must be explicit; to this end, we use Lemma \ref{weak-Estim-Lem}, Lemma \ref{bumpLemma} and Lemma \ref{smoothCharLemma}.

\begin{proof}[Proof of Proposition \ref{latticeProp}]
Let $\boldsymbol{n}=(m,n)\in\mathbb{Z}^2$, clearly
$|m|^p+|n|^p\le r^p$ if and only if $\chi_{B_p(0,r)}(\boldsymbol{n})=1$.
Consequently 
$$
N_p(r)=N_{B_p}(r)=\sum_{\boldsymbol{n}\in \mathbb{Z}^2}\chi_{B_p(0,r)}(\boldsymbol{n}).
$$
Let $p\in(1,2]$, let $r>1$ be fixed but arbitrary and take $\delta\in(0,1/2]$. (In fact, $r$ will be taken large and $\delta=\delta(r)$ will be taken small, see below.) Let also $c=2^{1/p-1/2}$ as in Lemma \ref{smoothCharLemma}. The "smooth counting function" is given by
\begin{equation}
    \nonumber
    N_{p,\delta}(r)=\sum_{\boldsymbol{n}\in\mathbb{Z}^2}\chi_{r,\delta}(\boldsymbol{n}).
\end{equation}
By Lemma \ref{smoothCharLemma}, property (\ref{4thItem}), there holds
\begin{equation}
    \label{lattice-Est2}
    N_{p,\delta}(r-c\delta)\le N_p(r)\le N_{p,\delta}(r+c\delta).
\end{equation}
Since $\chi_{r,\delta}\in C_0^\infty(\mathbb{R}^2)$ (by Lemma \ref{smoothCharLemma}, properties (\ref{1stItem}), (\ref{2ndItem})),  the Poisson summation formula applies
\begin{equation}
    \nonumber
    N_{p,\delta}(r)=\sum_{\boldsymbol{\omega}\in\mathbb{Z}^2}\widehat{\chi_{r,\delta}}(\boldsymbol{\omega})=\sum_{\boldsymbol{\omega}\in\mathbb{Z}^2}\widehat{\chi_{B_p(0,r)}}(\boldsymbol{\omega})\widehat{\Phi}(\delta\boldsymbol{\omega}).
\end{equation}
Note that $\chi_{B_p(0,r)}(\boldsymbol{x})=\chi_{B_p}(\boldsymbol{x}/r)$ whence
$$
\widehat{\chi_{B_p(0,r)}}(\boldsymbol{\omega})=\widehat{\chi_{B_p}(\cdot/r)}(\boldsymbol{\omega})=r^2\widehat{\chi_{B_p}}(r\boldsymbol{\omega}).
$$
Hence
\begin{eqnarray}
    \nonumber
    N_{p,\delta}(r)&=&
    r^2\widehat{\chi_{B_p}}(\boldsymbol{0})\widehat{\Phi}(\boldsymbol{0})+\\
    \nonumber&+&r^2\left(\sum_{0<|\boldsymbol{\omega}|\le\delta^{-1}}\widehat{\chi_{B_p}}(r\boldsymbol{\omega})\widehat{\Phi}(\delta\boldsymbol{\omega})+\sum_{|\boldsymbol{\omega}|>\delta^{-1}}\widehat{\chi_{B_p}}(r\boldsymbol{\omega})\widehat{\Phi}(\delta\boldsymbol{\omega})\right)\\
    \nonumber
    &=&\text{area}(B_p)r^2+r^2\left(\sum_{0<|\boldsymbol{\omega}|\le\delta^{-1}}\widehat{\chi_{B_p}}(r\boldsymbol{\omega})\widehat{\Phi}(\delta\boldsymbol{\omega})+\sum_{|\boldsymbol{\omega}|>\delta^{-1}}\widehat{\chi_{B_p}}(r\boldsymbol{\omega})\widehat{\Phi}(\delta\boldsymbol{\omega})\right).
\end{eqnarray}
Using (\ref{mainTeo:est1}) and (\ref{bumpRate}), we have
\begin{eqnarray}
    \nonumber    
    |N_{p,\delta}(r)-\text{area}(B_p) r^2|&\le&
    \frac{C_1r^2}{\sqrt{p-1}}\left(\sum_{0<\|\boldsymbol{\omega}\|_2\le\delta^{-1}}\frac{1}{r^{3/2}\|\boldsymbol{\omega}\|_2^{3/2}}+\sum_{\|\boldsymbol{\omega}\|_2>\delta^{-1}}\frac{3}{\delta r^{3/2}\|\boldsymbol{\omega}\|_2^{5/2}}\right)\\
    \nonumber
    &\le&\frac{C_1r^2}{\sqrt{p-1}}\left(\frac{1}{r^{3/2}}\int_{\|\boldsymbol{\omega}\|_2\le\delta^{-1}+1}\frac{d\boldsymbol{\omega}}{\|\boldsymbol{\omega}\|^{3/2}_2}+\right.\\
    \nonumber
    &+&\left.\frac{3}{\delta r^{3/2}}\int_{\|\boldsymbol{\omega}\|_2>\delta^{-1}-1}\frac{d\boldsymbol{\omega}}{\|\boldsymbol{\omega}\|^{5/2}_2}\right)\\
    \nonumber
    &=&\frac{C_1r^2}{\sqrt{p-1}}\left(\frac{4\pi(\delta^{-1}+1)^{1/2}}{r^{3/2}}+\frac{12\pi}{\delta r^{3/2}(\delta^{-1}-1)^{1/2}}\right).
\end{eqnarray}
For the integrals in the penultimate line above, we used
$$
\int_{\|\boldsymbol{\omega}\|_2\le R}\frac{d\boldsymbol{\omega}}{\|\boldsymbol{\omega}\|^{3/2}_2}=4\pi R^{1/2}\quad\text{and}\quad\int_{\|\boldsymbol{\omega}\|_2>R}\frac{d\boldsymbol{\omega}}{\|\boldsymbol{\omega}\|^{5/2}_2}=4\pi R^{-1/2}.
$$
Since $\delta\le1/2$, we have $(\delta^{-1}+1)^{1/2}\le\sqrt{2}\delta^{-1/2}$ and $(\delta^{-1}-1)^{-1/2}=\delta^{1/2}(1-\delta)^{1/2}\le 2^{3/2}\delta^{-3/2}$. Whence,
\begin{eqnarray}
    \label{lattice-Est1}
    |N_{p,\delta}(r)-\text{area}(B_p) r^2|&\le&4\pi C_1\left(\sqrt{2}+3\sqrt{8}\right)\frac{r^{1/2}\delta^{-1/2}}{\sqrt{p-1}}.
\end{eqnarray}
In fact, (\ref{lattice-Est1}) may be written
\begin{equation}
    \label{lattice-Est11}
    N_{p,\delta}(r)=\text{area}(B_p) r^2+\frac{r^{1/2}\delta^{-1/2}g(r)}{\sqrt{p-1}},
\end{equation}
where $g(r)$ is a function that remains bounded for all $r>1$. 
Using $r\pm c\delta$ in (\ref{lattice-Est11}) yields the estimates
\begin{eqnarray}
    \label{lattice-Est21}
    N_{p,\delta}(r+c\delta)=\text{area}(B_p) r^2+\frac{r^{1/2}\delta^{-1/2}g(r+c\delta)}{\sqrt{p-1}}-\epsilon^+(r,\delta),
\end{eqnarray}
and
\begin{equation}
    \label{lattice-Est22}
    N_{p,\delta}(r-c\delta)=\text{area}(B_p) r^2+\frac{r^{1/2}\delta^{-1/2}g(r-c\delta)}{\sqrt{p-1}}-\epsilon^-(r,\delta),
\end{equation}
where $\max\{|\epsilon^+(r,\delta)|,|\epsilon^-(r,\delta)|\}\le 4\text{area}(B_p)cr\delta$.
By (\ref{lattice-Est2}), (\ref{lattice-Est21}) and (\ref{lattice-Est22})
we have
\begin{equation}
    \label{lattice-Est3}
     E_p(r)\le\frac{C r^{1/2}\delta^{-1/2}}{\sqrt{p-1}}+4\text{area}(B_p)cr\delta\le\frac{C}{\sqrt{p-1}}(r^{1/2}\delta^{-1/2}+r\delta),
\end{equation}
since $(p-1)^{-1/2}\ge1$.
Taking $\delta=r^{-1/3}$ in (\ref{lattice-Est3}) completes the proof of (\ref{lattice}), with $C_3=8\pi C_1\times 7\sqrt{2}=21\pi 2^{23/4}\approx 3550$.
\end{proof}

\end{document}